\newtheorem{lemma}{Lemma}
\newtheorem{prop}[lemma]{Proposition}
\numberwithin{lemma}{section}
\numberwithin{theorem}{section}
\numberwithin{fact}{section}
\numberwithin{equation}{section}
\title{Volume Laws for Boxed Plane Partitions and Area Laws for Ferrers Diagrams}
\author{U.~Schwerdtfeger\\
\\
Fakult\"at f\"ur Mathematik\\
Universit\"at Bielefeld\\
Postfach 10 01 31, 33501 Bielefeld, Germany\\
}
\begin{document}

\maketitle

\begin{abstract}We asymptotically analyse the volume-random variables of general, symmetric and cyclically symmetric plane partitions fitting inside a box. We consider the respective symmetry class equipped with the uniform distribution. We also prove area limit laws for two ensembles of Ferrers diagrams. Most of the limit laws are Gaussian.
\end{abstract}

\section{Introduction}
A \emph{plane partition fitting inside an $(r,s,t)$-box} is an $r \times s$-array $\Pi$ of non-negative integers $p_{i,j} \le t$ with weakly decreasing rows and columns. It can be visualised as a pile of unit cubes in the box  ${\cal B} (r,s,t):= \left[0,r \right ]\times \left[0,s \right ]\times \left[0,t \right ] $ ``flushed into the corner", see figure \ref{pileofcubes} and \cite{Br}. The figure also illustrates the connection to tilings of a hexagon of side lengths $r,s$ and $t$ by lozenges. Yet another interpretation is viewing such a pile of cubes as an order ideal in the product of three finite chains (total orders) with the respective lengths $r,s$ and $t.$ In the following we mean by ``plane partition" one which fits inside an $(r,s,t)$-box. The volume of a plane partition is the sum of its parts or the number of unit cubes in the pile or the cardinality of the order ideal, respectively. If one side length, say $t$ of the bounding box is one, such a pile can be regarded as Ferrers diagram fitting inside an $r\times s$-rectangle of area equal to the volume. Let ${\cal PP}(r,s,t)$ denote the set of all plane partitions fitting inside ${\cal B}(r,s,t).$ A plane partition $\Pi\in{\cal PP}(r,r,t)$ is called \emph{symmetric} if the corresponding pile of cubes is symmetric about $x=y.$ To put it differently the cube $(i,j,k)$ belongs to the pile, if and only if $(j,i,k)$ does. The respective tiling is symmetric w.r.t. a vertical axis.  A plane partition $\Pi\in{\cal PP}(r,r,r)$ is called \emph{cyclically symmetric} if the corresponding tiling is invariant under a rotation about $2\pi/3,$ i.e. the cube $(i,j,k)$ belongs to the pile if and only if $(k,i,j)$ and $(j,k,i)$ do. Call these subsets ${\cal SPP}(r,t)$ and ${\cal CSPP}(r)$ respectively. Several authors discussed properties of randomly chosen plane partitions, for example in \cite{Wi} diagonal sums are considered. In \cite{CLP} an ``Arctic Circle Theorem" for tilings of a large hexagon is proved, which states that a typical tiling looks periodic near the corners of the hexagon and unordered inside the inscribed ellipse. \\
It is natural to ask also for volume limit laws as the generating functions counting plane partitions by volume are available for the above classes. For the following formulae we refer to \cite{Br}. The volume generating function of ${\cal PP}(r,s,t)$ is
\begin{equation} \label{gfpp}
\prod_{i=1}^r\prod_{j=1}^s\prod_{k=1}^t \frac{1-q^{i+j+k-1}}{1-q^{i+j+k-2}}.
\end{equation}

\begin{figure}\label{pileofcubes}
\begin{center}
\includegraphics[height=70mm,width=80mm]{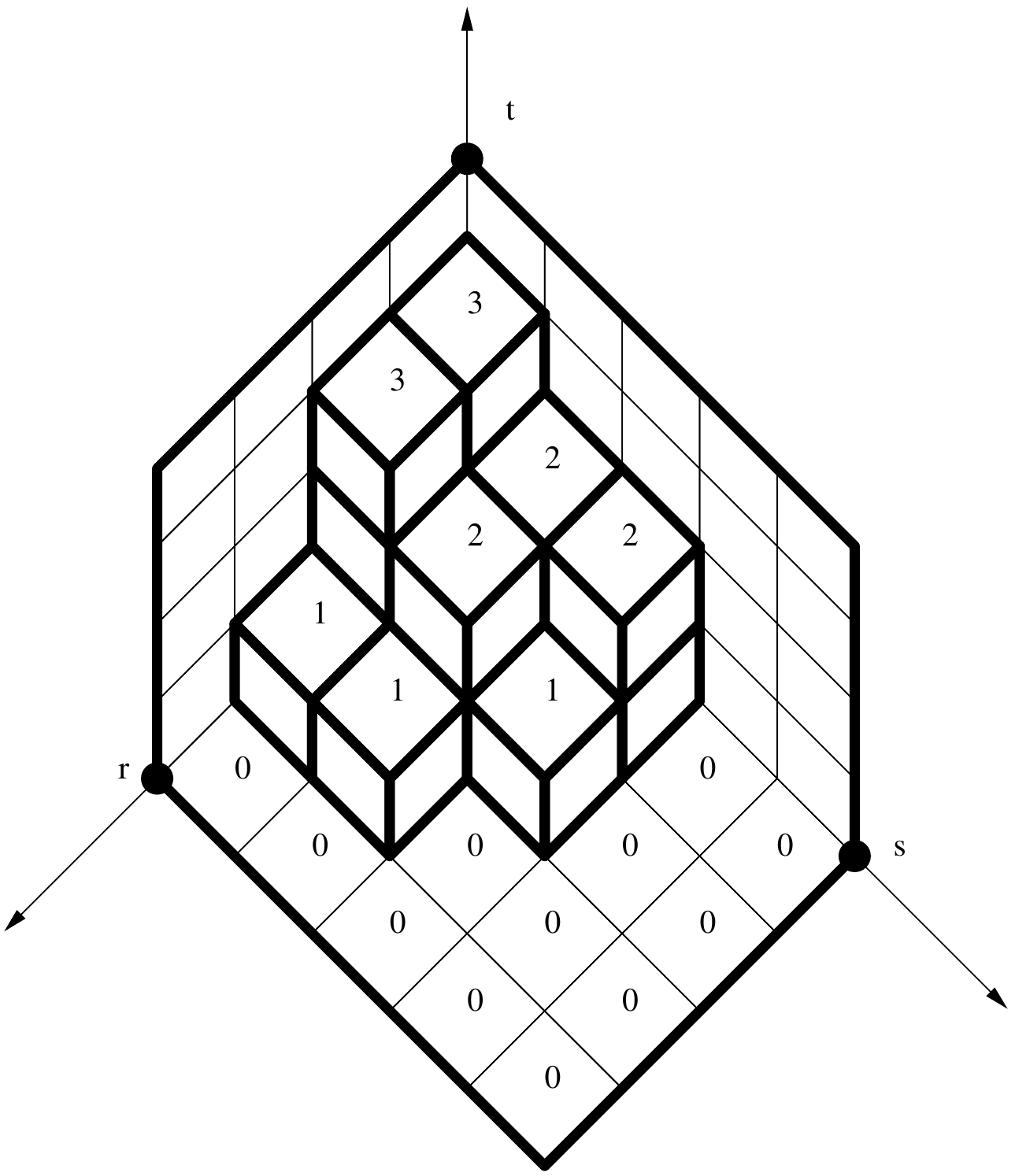}
\caption{A plane partition in $(4,5,4)$-box}
\end{center}
\end{figure}
Symmetric plane partitions in ${\cal B}(r,r,t)$ have the volume generating function
\begin{equation}\label{sympp}
\left(\prod_{i=1}^r\prod_{k=1}^t\frac{1-q^{2i+k-1}}{1-q^{2i+k-2}}\right)
\left( \prod_{1\le i<j\le r}\prod_{k=1}^t   \frac{1-q^{2+2(i+j+k-2)}}{1-q^{2(i+j+k-2)}}    \right).\end{equation}
Here the first product corresponds to singleton orbits and the second product corresponds to doubleton orbits of ${\cal B}(r,r,t)$ under switching the first two coordinates.
For the cyclically symmetric plane partitions we have the volume generating function
\begin{equation} \label{cspp}
\left(\prod_{1\le i<j<k\le r} \frac{1-q^{3(i+j+k-1)}}{1-q^{3(i+j+k-2)}}\right)^2
\left(\prod_{1\le i<k\le r} \frac{1-q^{3(2i+k-1)}}{1-q^{3(2i+k-2)}}\cdot\frac{1-q^{3(i+2k-1)}}{1-q^{3(i+2k-2)}}\right) \left(\prod_{i=1}^r \frac{1-q^{3i-1}}{1-q^{3i-2}}\right), 
\end{equation}
where the first product runs over ${\cal C}_3$ orbits of ${\cal B}(r,r,r)$ with all coordinates distinct, the second runs over orbits with exactly two coordinates equal and the third corresponds to singleton orbits. The above generating functions are polynomials in $q$ whose degree is the volume of the bounding box.\\
We equip ${\cal PP}(r,s,t),$ ${\cal SPP}(r,t)$ and ${\cal CSPP}(r)$ with the uniform distribution (``uniform fixed bounding box ensemble"). By $X=X_{rst}$ (resp. $=X_{rt},\;X_r$) denote the corresponding volume random variables. In the subsequent discussion we will drop subscripts as the side lengths are always $r,s$ and $t$ (resp. $r$ and $t$, $r$).  The probability generating function of this random variable is $P(q):=G(q)/G(1)$ where $G(q)$ is the respective generating function. In the ${\cal PP}(r,s,t)$ case this reads with the notion $\alpha_{ijk}:=i+j+k-1$
\begin{equation}\label{PGF}
P(q)=\prod_{i=1}^r\prod_{j=1}^s\prod_{k=1}^t \frac{(\alpha_{ijk}-1)\left(1-q^{\alpha_{ijk}}\right)}{\alpha_{ijk}\left(1-q^{\alpha_{ijk}-1}\right)}.
\end{equation}
 In the following $g(x)^{(N)}$ denotes the $N^{th}$ derivative of $g$ w.r.t. $x.$ Moments of $X$ of arbitrary order $N$ exist and can be computed via 
\begin{equation}
\mathbb{E}\left(X^N \right ) =(-I)^N\left .\left( P(e^{Ix})\right)^{(N)} \right |_{x=0},
\end{equation}
where $P(e^{Ix})$ ($I^2=-1$) is the characteristic function of $X.$
\section{Mean, variance and concentration properties}
Denote by $\mu,$ $\mu_{spp}$ and $\mu_{cspp}$ the mean value and by $\sigma,$ $\sigma_{spp}$ and $\sigma_{cspp}$ the standard deviation of the volume variables on ${\cal PP}(r,s,t),$ ${\cal SPP}(r,t)$ and ${\cal CSPP}(r),$ respectively.
\begin{lemma}\label{expectation}
The volume distributions are symmetric about half the volume of the bounding box, and hence the expected volumes are $\mu=rst/2,$ $\mu_{spp}=r^2t/2$ and $\mu_{cspp}=r^3/2.$ 
\end{lemma}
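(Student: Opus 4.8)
The plan is to produce, uniformly for all three symmetry classes, a \emph{volume--reversing involution} on the underlying set; by uniformity of the distribution this forces the volume variable to be symmetric about half the volume of the box, and the three stated expectations then follow at once.

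First I would work in the order-ideal picture recalled in the introduction: identify a plane partition $\Pi\in{\cal PP}(r,s,t)$ with the order ideal (down-set) $P_\Pi$ it cuts out in the poset $[r]\times[s]\times[t]$ with componentwise order, so that $|P_\Pi|$ is the volume. Let $\varphi(i,j,k):=(r+1-i,\,s+1-j,\,t+1-k)$; this is an order-reversing involution of the box which maps down-sets to up-sets and vice versa. Define the complement $\Pi^{\mathrm c}$ by $P_{\Pi^{\mathrm c}}:=\varphi\bigl({\cal B}(r,s,t)\setminus P_\Pi\bigr)$. Since the set-theoretic complement of a down-set is an up-set and $\varphi$ turns this back into a down-set, $\Pi^{\mathrm c}$ is again a legitimate plane partition; a short computation using $\varphi^2=\mathrm{id}$ and $\varphi({\cal B})={\cal B}$ gives $(\Pi^{\mathrm c})^{\mathrm c}=\Pi$, and clearly $|P_{\Pi^{\mathrm c}}|=rst-|P_\Pi|$. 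Hence $\Pi\mapsto\Pi^{\mathrm c}$ is a bijection between plane partitions of volume $v$ and plane partitions of volume $rst-v$.

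Next I would check that complementation is compatible with the extra symmetries, so that it restricts to the other two ensembles. The point is that $\varphi$ commutes with the relevant symmetry group: $\varphi$ commutes with the transposition $(i,j,k)\mapsto(j,i,k)$ on $[r]\times[r]\times[t]$ and with the cyclic permutation $(i,j,k)\mapsto(k,i,j)$ on $[r]\times[r]\times[r]$, and the boxes ${\cal B}(r,r,t)$ and ${\cal B}(r,r,r)$ are invariant under these maps. Therefore, if $P_\Pi$ is symmetric (resp. cyclically symmetric), then so is ${\cal B}\setminus P_\Pi$, and hence so is $P_{\Pi^{\mathrm c}}$; thus complementation restricts to a volume-reversing involution of ${\cal SPP}(r,t)$ and of ${\cal CSPP}(r)$, whose box volumes are $r^2t$ and $r^3$ respectively.

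Finally, on any finite set carrying the uniform distribution a volume-reversing involution shows $X\stackrel{d}{=}V-X$, where $V$ is the volume of the relevant box; hence the distribution of $X$ is symmetric about $V/2$ and $\mathbb{E}(X)=\mathbb{E}(V-X)=V-\mathbb{E}(X)$, so $\mathbb{E}(X)=V/2$. Substituting $V=rst,\ r^2t,\ r^3$ gives $\mu=rst/2$, $\mu_{spp}=r^2t/2$, $\mu_{cspp}=r^3/2$. An alternative, purely algebraic route is to observe that the generating functions (\ref{gfpp}), (\ref{sympp}) and (\ref{cspp}) are palindromic polynomials of degree equal to the box volume --- each can be regrouped into products of $q$-integer ratios that telescope to Gaussian binomial coefficients, which are palindromic --- so that $P(q)=q^{V}P(1/q)$. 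I expect the only slightly delicate point of the combinatorial argument to be the verification that complementation really restricts to the symmetric and cyclically symmetric classes, i.e. the commutation of $\varphi$ with the symmetry groups together with the invariance of the boxes; the rest is bookkeeping.
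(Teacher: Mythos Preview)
Your proposal is correct and is essentially the same argument as the paper's: the paper's ``fill the empty space with red cubes'' is exactly your complementation map $\Pi\mapsto\Pi^{\mathrm c}$ described in pile-of-cubes language, and its claim that this ``respects symmetry and cyclic symmetry'' is what you spell out via the commutation of $\varphi$ with the relevant symmetry groups. Your version is more explicit (the order-reversing involution $\varphi$ and the verification that complementation restricts to ${\cal SPP}$ and ${\cal CSPP}$ are only implicit in the paper), but the underlying idea is identical.
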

\begin{proof}
Let $v$ be the volume of the bounding box ${\cal B}.$ We show that $X$ and $v-X$ are equally distributed. Let a plane partition of volume $k$ be represented by a pile of $k$ green cubes inside ${\cal B}.$ Now fill up the empty space in  ${\cal B}$ with red cubes. The red cubes represent a plane partition of volume $v-k.$ This construction describes a bijection between plane partitions of volume $k$ and $v-k$ which respects symmetry and cyclic symmetry. Now the lemma follows as $\mathbb{E}(X)=\mathbb{E}(v-X).$
\end{proof}
In order to prove the Gaussian limits we consider the characteristic functions $P(e^ {Ix})$ of the random variables $X.$ More precisely, since sums are easier to handle than products, we will study the logarithms of the characteristic functions. The following lemma  enables us to compute explicit formulas for the variances of the random variables and to estimate the Taylor coefficients of the functions in question.
\begin{lemma}\label{sumest}
For positive real numbers $\alpha,c$ with $\alpha>c\ge 1,$ we have the expansion
\begin{equation}\label{Abl}
\log\left( \frac{(\alpha-c)\left(1-e^{\alpha x}\right)}{\alpha\left(1-e^{(\alpha-c) x}\right)}\right)
=\sum_{N \ge 1} H_{N,c}(\alpha)x^N.
\end{equation}
Here $H_{N,c}$ is a polynomial of degree $N-1$ in $\alpha.$ In particular we have $H_{1,c}(\alpha)=\frac{1}{2}$ and $H_{2,c}(\alpha)=-\frac{c}{12}\alpha+\frac{c^2}{24}.$ Furthermore there is a positive constant $D,$ such that the inequality
\begin{equation}\label{TaylorEst}
\left |H_{N,c}(\alpha)\right|\le D\cdot \alpha^{N-1}\cdot (2c)^N
\end{equation}
holds for all $N\in \mathbb{N}.$ 
\end{lemma}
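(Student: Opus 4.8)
The plan is to divide out the factors that create the apparent singularity at $x=0$ and reduce everything to the Taylor expansion of a single fixed analytic function. Put $\psi(u):=(e^{u}-1)/u$, so $\psi$ is entire, $\psi(0)=1$, and $1-e^{u}=-u\,\psi(u)$. Then the prefactor $(\alpha-c)/\alpha$ cancels exactly against the linear parts of $1-e^{\alpha x}$ and $1-e^{(\alpha-c)x}$:
\[
\frac{(\alpha-c)\left(1-e^{\alpha x}\right)}{\alpha\left(1-e^{(\alpha-c)x}\right)}
=\frac{(\alpha-c)\,(-\alpha x)\,\psi(\alpha x)}{\alpha\,\bigl(-(\alpha-c)x\bigr)\,\psi\!\left((\alpha-c)x\right)}
=\frac{\psi(\alpha x)}{\psi\!\left((\alpha-c)x\right)} .
\]
Hence the left-hand side of \eqref{Abl} equals $\phi(\alpha x)-\phi\!\left((\alpha-c)x\right)$, where $\phi(u):=\log\psi(u)$. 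Since $e^{u}-1$ vanishes only on $2\pi i\,\mathbb{Z}$, the function $\psi$ is analytic and nowhere zero on $|u|<2\pi$, so $\phi$ is analytic there with $\phi(0)=0$; write $\phi(u)=\sum_{N\ge1}b_{N}u^{N}$, where $b_{1}=\tfrac12$, $b_{2}=\tfrac{1}{24}$, and $b_{N}=0$ for odd $N\ge3$. Substituting and re-expanding in powers of $x$ — legitimate for $|x|<2\pi/\alpha$, where both series converge — gives \eqref{Abl} with
\[
H_{N,c}(\alpha)=b_{N}\bigl(\alpha^{N}-(\alpha-c)^{N}\bigr).
\]

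Now the algebra is routine. By the identity $\alpha^{N}-(\alpha-c)^{N}=c\sum_{j=0}^{N-1}\alpha^{j}(\alpha-c)^{N-1-j}$ the $\alpha^{N}$-terms cancel, so $H_{N,c}$ is a polynomial in $\alpha$ of degree at most $N-1$; inserting $b_{1}$ and $b_{2}$ produces the explicit formulas for $H_{1,c}$ and $H_{2,c}$. For the estimate \eqref{TaylorEst}, the same identity together with $0\le\alpha-c\le\alpha$ (this is the only place $\alpha>c$ is used) gives $\alpha^{j}(\alpha-c)^{N-1-j}\le\alpha^{N-1}$ for every $j$, hence $\bigl|\alpha^{N}-(\alpha-c)^{N}\bigr|\le Nc\,\alpha^{N-1}$. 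On the other hand, $\phi$ is continuous on the closed disc $|u|\le\pi$, so Cauchy's inequality yields $|b_{N}|\le M:=\max_{|u|=\pi}|\phi(u)|$ for all $N\ge1$ (using $\pi>1$). Combining these, and using $N\le2^{N}$ and $c\le c^{N}$ (valid since $c\ge1$), we get
\[
\bigl|H_{N,c}(\alpha)\bigr|\le M\,Nc\,\alpha^{N-1}\le M\,(2c)^{N}\,\alpha^{N-1},
\]
which is \eqref{TaylorEst} with $D:=M$.

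The one point that needs care is the opening reduction: one has to check that the prefactors $(\alpha-c)/\alpha$ really do absorb the singular behaviour at $x=0$, so that $\phi$ is genuinely analytic at the origin (and the logarithm is the ordinary real logarithm of a quantity close to $1$ for small $x$). After that, the degree count and the coefficient bound follow mechanically — in fact the Cauchy bound $|b_{N}|\le M$ is very wasteful, since $b_{N}=O\!\left((2\pi)^{-N}\right)$, which is exactly why the generous factor $(2c)^{N}$ in \eqref{TaylorEst} causes no difficulty.
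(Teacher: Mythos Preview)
Your proof is correct and follows essentially the same route as the paper: both introduce the auxiliary function $g(u)=\log\bigl((e^{u}-1)/u\bigr)$ (your $\phi$), rewrite the left-hand side as $g(\alpha x)-g((\alpha-c)x)$, read off $H_{N,c}(\alpha)=b_N\bigl(\alpha^N-(\alpha-c)^N\bigr)$, and bound $|b_N|$ by a constant from the radius of convergence $2\pi$. The only cosmetic difference is in estimating $\bigl|\alpha^N-(\alpha-c)^N\bigr|$: you use the telescoping factorization to get $Nc\,\alpha^{N-1}$ and then $N\le 2^{N}$, $c\le c^{N}$, whereas the paper expands binomially and bounds $\sum_{k=1}^{N}\binom{N}{k}c^{k}\alpha^{N-k}\le 2^{N}c^{N}\alpha^{N-1}$; both arrive at the same bound $D\,(2c)^{N}\alpha^{N-1}$.
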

\begin{proof}
Define the function $g(t)$ by
\begin{equation}\label{defg}
g(t)= \log \left(\frac{e^{t}-1}{t}\right).
\end{equation}
Then the lhs of \eqref{Abl} is easily seen to be equal to $g\left(\alpha x\right)-g\left((\alpha-c)x\right).$
We have the following series expansion for $g(t):$
\begin{equation}
g(t)= \log \left(\frac{e^{t}-1}{t}\right)=\log \left(1+\frac{1}{2}t+\frac{1}{3!}t^2+\ldots\right)=\sum_{N\ge1}b_Nt^N.
\end{equation}
Observe that the singularities of $g$ of smallest modulus are $\pm 2\pi I,$ so the numbers $\left|b_N\right|$ decay like $(2\pi)^{-N}$ and hence are bounded by some constant $D.$ The  $N^{th}$ coefficient in the Taylor expansion of $g\left(\alpha x\right)-g\left((\alpha-c)x\right)$ about $x=0$ is in fact a polynomial of degree $N-1,$ namely
\begin{equation}
H_{N,c}\left(\alpha\right)=b_N\cdot\left(\alpha^N-(\alpha-c)^N \right)=b_N\cdot \sum_{k=1}^N \binom{N}{k}(-1)^{k+1}c^k\alpha^{N-k}.
\end{equation}
As $\alpha>c\ge 1$ it can be estimated as follows:
\begin{equation}
\left|b_N\cdot \sum_{k=1}^N (-1)^{k+1}\binom{N}{k}c^k\alpha^{N-k}\right| \le D\cdot\alpha^{N-1}\cdot c^N\cdot \sum_{k=1}^N \binom{N}{k} \le D\cdot \alpha^{N-1}\cdot c^N\cdot2^N.
\end{equation}
This finishes the proof of Lemma \ref{sumest}.
\end{proof}
Now we can easily compute the variances. The formula for $\sigma^2$ already appeared in \cite{Wi}. 
\begin{lemma}\label{var} We have 
\begin{enumerate}
\item $\sigma^2=\frac{1}{12}(r^2st+rs^2t+rst^2)=\frac{1}{12}rst(r+s+t),$
\item $\sigma^2_{spp}=\frac{1}{3}tr^3+\frac{1}{6}t^2r^2-\frac{1}{12}t^2r+\frac{1}{6}tr^2-\frac{1}{3}tr$ and
\item $\sigma^2_{cspp}=\frac{3}{4}r^4-\frac{1}{2}r^2.$ 
\end{enumerate}
\end{lemma}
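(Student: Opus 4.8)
The plan is to read off the variances directly from the logarithm of the probability generating function, using Lemma \ref{sumest} as the computational engine. Recall that if $P(q)$ is the probability generating function of a random variable $X$, then $\log P(e^{Ix})$ has the Taylor expansion $\mu\,(Ix) + \tfrac{1}{2}\sigma^2\,(Ix)^2 + O(x^3)$ about $x=0$; equivalently, writing $\log P(e^x) = \sum_{N\ge 1} c_N x^N$, we have $\mu = c_1$ and $\sigma^2 = 2c_2 - \mu$ (since $c_2 = \tfrac12(\sigma^2+\mu^2)$ only after exponentiating — more cleanly, the second cumulant is $2c_2$, so $\sigma^2 = 2c_2$ once we note $c_1 = \mu$ contributes to the first cumulant only). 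I will instead use the cleaner bookkeeping: $\log P(e^x)$ is the cumulant generating function evaluated at $x$, so the coefficient of $x^2$ in $\log P(e^x)$ is exactly $\sigma^2/2$. Thus it suffices to extract the $x^2$-coefficient of $\log P(e^x)$ in each of the three cases.

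For the general case, take the logarithm of \eqref{PGF}: $\log P(e^x) = \sum_{i=1}^r\sum_{j=1}^s\sum_{k=1}^t \log\!\left(\frac{(\alpha_{ijk}-1)(1-e^{\alpha_{ijk}x})}{\alpha_{ijk}(1-e^{(\alpha_{ijk}-1)x})}\right)$, where $\alpha_{ijk} = i+j+k-1$. Applying Lemma \ref{sumest} with $\alpha = \alpha_{ijk}$ and $c=1$, the $x^2$-coefficient of each summand is $H_{2,1}(\alpha_{ijk}) = -\tfrac{1}{12}\alpha_{ijk} + \tfrac{1}{24}$. Hence $\sigma^2/2 = \sum_{i,j,k}\left(-\tfrac{1}{12}(i+j+k-1)+\tfrac1{24}\right) = -\tfrac{1}{12}\sum_{i,j,k}(i+j+k-2)$. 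Now $\sum_{i=1}^r\sum_{j=1}^s\sum_{k=1}^t(i+j+k-2)$ splits as $st\binom{r+1}{2}\!/\!1$-type sums; explicitly it equals $st\cdot\frac{r(r+1)}2 + rt\cdot\frac{s(s+1)}2 + rs\cdot\frac{t(t+1)}2 - 2rst$, which simplifies to $\tfrac12 rst(r+s+t-1)$. Wait — one must be careful: it is cleaner to use $\sum(i+j+k-2) = \sum(i-1) + \sum(j-1) + \sum(k-1) + \sum 1 - rst$... I will just carry out this elementary sum carefully; the result should be $-\sigma^2/12 \cdot(\ldots)$ giving $\sigma^2 = \tfrac1{12}rst(r+s+t)$ after the lower-order terms cancel against the $\tfrac1{24}$ contributions. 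This cancellation of the linear-in-$r,s,t$ pieces is the one spot demanding attention.

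For the symmetric and cyclically symmetric cases the structure is identical but the index sets and the constant $c$ change. In \eqref{sympp}, the singleton-orbit factors contribute terms $\log\big((2i+k-1)\text{-type ratios}\big)$ which after normalisation fit the form of Lemma \ref{sumest} with $\alpha = 2i+k-1$, $c=1$; the doubleton-orbit factors, after dividing numerator and denominator appropriately, fit with $\alpha = 2(i+j+k-2)$ and $c = -2$ — or rather, one rewrites $\frac{1-q^{2m+2}}{1-q^{2m}}$ in the shape $\frac{(\alpha-c)(1-q^\alpha)}{\alpha(1-q^{\alpha-c})}$ with $\alpha = 2m+2$, $c=2$, up to a constant prefactor that is killed by normalising $P$. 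Then $\sigma^2_{spp}/2 = \sum_{i=1}^r\sum_{k=1}^t H_{2,1}(2i+k-1) + \sum_{1\le i<j\le r}\sum_{k=1}^t H_{2,2}(2(i+j+k-2))$, and expanding $H_{2,1}, H_{2,2}$ and summing the resulting polynomials in $i,j,k$ over the stated ranges gives the claimed expression. Similarly, \eqref{cspp} breaks into three families of factors with arguments $3(i+j+k-1)$, $3(2i+k-1)$, $3(i+2k-1)$, $3i-1$ and various values of $c$ (essentially $c=3$ for the $\mathbb{C}_3$-orbit factors because of the $q\mapsto q^3$ substitution, and $c=1$ for the singleton factors); summing $H_{2,c}$ over the triangular and linear index sets yields $\sigma^2_{cspp} = \tfrac34 r^4 - \tfrac12 r^2$. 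The main obstacle is purely organisational: correctly identifying $\alpha$ and $c$ for each orbit type (watching the $q\mapsto q^2$ or $q\mapsto q^3$ rescalings, which multiply $\alpha$ and $c$ by $2$ or $3$), and then evaluating several nested sums of low-degree polynomials over triangles $\{1\le i<j\le r\}$ and $\{1\le i<j<k\le r\}$ — routine but error-prone. No genuine analytic difficulty arises here; that is deferred to the later sections where the full Taylor tail must be controlled via \eqref{TaylorEst}.
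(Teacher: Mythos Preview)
Your approach is exactly the paper's: extract the variance as (twice) the $x^2$-coefficient of $\log P(e^{x})$ --- equivalently, as $-\log P(e^{Ix})''\big|_{x=0}$ --- use Lemma~\ref{sumest} to identify each log-factor's contribution as $H_{2,c}(\alpha)$ for the appropriate $\alpha$ and $c$, and then sum the resulting linear polynomial over the index set; the paper carries this out explicitly for ${\cal PP}(r,s,t)$ and declares the other two cases analogous with different choices of $\alpha,c$, just as you outline. One caution on execution: the formula $H_{2,c}(\alpha)=-\tfrac{c}{12}\alpha+\tfrac{c^2}{24}$ you quote from Lemma~\ref{sumest} carries a sign typo (the correct value is $+\tfrac{c}{12}\alpha-\tfrac{c^2}{24}$, as one sees from $b_2=1/24$), which you need for $\sigma^2>0$; relatedly, your simplification $-\tfrac{1}{12}(i+j+k-1)+\tfrac{1}{24}=-\tfrac{1}{12}(i+j+k-2)$ is off (the shift is $-\tfrac{3}{2}$, not $-2$).
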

\begin{proof}
Recall that the variance of a random variable $Y$ can be obtained as the $\mathbb{V}(Y)=-\left.\log \left( P(e^{Ix})\right)^{\prime\prime} \right|_{x=0},$ where $P(q)$ is the probability generating function of $Y.$ In the ${\cal PP}(r,s,t)$ case we apply this to $P(q)$ as in \eqref{PGF} and obtain 
\begin{equation}\label{logPGF}
-\left.\log \left( P(e^{Ix})\right)^{\prime\prime}\right|_{x=0}=-\sum_{i=1}^r\sum_{j=1}^s\sum_{k=1}^t \left.\log\left(\frac{(\alpha_{ijk}-1)\left(1-e^{\alpha_{ijk}Ix}\right)}{\alpha_{ijk}\left(1-e^{(\alpha_{ijk}-1)Ix}\right)}\right)^{\prime\prime}\right|_{x=0},
\end{equation}
where $\alpha_{ijk}=i+j+k-1.$ According to Lemma \ref{sumest}, each summand on the rhs is equal to 
\begin{equation}
2!H_{2,1}(\alpha_{ijk})=-\frac{1}{6}\alpha_{ijk}+\frac{1}{12}
\end{equation}
and a straightforward calculation yields
\begin{equation}\label{varest}
\sum_{i=1}^r\sum_{j=1}^s\sum_{k=1}^t \left(\frac{1}{6} (i+j+k-1) -\frac{1}{12}\right)=\frac{1}{12}\left(r^2st+rs^2t+rst^2 \right). 
\end{equation}
The other variances are calculated in an analogous way with suitable choices of $\alpha$ and $c.$
 \end{proof}
 Now we can investigate concentration properties of the families of volume random variables, i.e. we study the quotients of standard deviation and mean when the box gets large. According to Lemma \ref{expectation} and Lemma \ref{var} these quotients tend to zero if at least two side lengths of the bounding box tend to infinity. The latter is in particular satisfied when $r\to \infty$ in the ${\cal SPP}(r,t)$ and ${\cal CSPP}(r)$ case. In the general case of ${\cal PP}(r,s,t)$ we have 
\begin{equation}\label{conpro}
\left(\frac{\sigma}{\mu}\right)^2 =\frac{1}{3} \left( \frac{1}{st}+\frac{1}{rt}+\frac{1}{rs} \right).
\end{equation}
If say $r$ and $s$ are unbounded, the right hand side of \eqref{conpro} tends to zero for $r,s \to \infty$ and the family is concentrated to the mean, i.e. for every $\delta>0$ we have
\[
 \mathbb{P}\left( 1-\delta \le \frac{X}{\mathbb{E}(X)} \le 1+\delta \right) \longrightarrow 1\qquad (r,s \to \infty) 
\]
On the other hand, if two coordinates are fixed, say $r,s,$ then the quotient is bounded away from zero by $\frac{1}{3rs}$ for $t \to \infty$. These families are not concentrated. Similar considerations hold in the case of ${\cal SPP}(r,t)$ when $r$ is fixed and $t \to \infty.$ In the next section we investigate the limit laws in these two cases.
\section{Limit laws}
We first give a result for the concentrated families.
\begin{prop}\label{concpp}
If at least two of $r,s,t$ tend to infinity, the family 
\[
\left(Y:=\frac{X-\mu}{\sigma}\right)
\]
of normalised volume random variables on ${\cal PP}(r,s,t)$ converges in distribution to a standard normal distributed random variable. The same statement holds for the volume variables on ${\cal SPP}(r,t)$ if at least $r \to \infty$ and on ${\cal CSPP}(r)$ if $r\to \infty.$
\end{prop}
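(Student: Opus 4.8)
The plan is to prove asymptotic normality via the classical moment method (the method of cumulants), using the explicit Taylor-coefficient bounds from Lemma~\ref{sumest}. The characteristic function of $X$ factors as a product over the triples $(i,j,k)$, so $\log P(e^{Ix})$ is a sum of terms $g(\alpha_{ijk}Ix)-g((\alpha_{ijk}-1)Ix)$, and by Lemma~\ref{sumest} we may write
\[
\log P(e^{Ix}) = \sum_{N\ge 1} c_N (Ix)^N, \qquad c_N = \sum_{i=1}^r\sum_{j=1}^s\sum_{k=1}^t H_{N,1}(\alpha_{ijk}).
\]
The quantities $c_1=\mu$ and $2c_2 = -\sigma^2$ (up to the sign conventions already fixed in Lemmas~\ref{expectation} and~\ref{var}); more importantly, $N!\,c_N$ is, up to sign, the $N$-th cumulant of $X$. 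The goal is to show that after centering and scaling by $\sigma$, all cumulants of order $N\ge 3$ tend to zero, while the second cumulant is $1$ by construction; since the normal distribution is determined by its moments, this yields convergence in distribution.

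First I would use the bound \eqref{TaylorEst} with $c=1$: since $\alpha_{ijk}\le r+s+t$ throughout the range of summation, we get $|H_{N,1}(\alpha_{ijk})|\le D\,2^N (r+s+t)^{N-1}$, and summing over the $rst$ triples gives
\[
|c_N| \le D\,2^N\, rst\,(r+s+t)^{N-1}.
\]
The $N$-th cumulant of the normalised variable $Y=(X-\mu)/\sigma$ is $N!\,c_N/\sigma^N$ in absolute value (for $N\ge 3$; the first cumulant vanishes by centering and the second equals $1$). Using Lemma~\ref{var}.1, $\sigma^2 = \tfrac{1}{12}rst(r+s+t)$, so
\[
\frac{|c_N|}{\sigma^N} \;\le\; \frac{D\,2^N\, rst\,(r+s+t)^{N-1}}{\bigl(\tfrac{1}{12}rst(r+s+t)\bigr)^{N/2}} \;=\; D\,2^N\,12^{N/2}\,\frac{(rst)^{1-N/2}(r+s+t)^{N/2-1}}{1}.
\]
For $N\ge 3$ the exponent $1-N/2$ is negative, so this bound goes to $0$ as soon as $rst\to\infty$ faster than $(r+s+t)$ — which holds precisely when at least two of $r,s,t$ go to infinity, because then $rst/(r+s+t)\to\infty$. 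This is exactly the hypothesis of the proposition. Hence every cumulant of order $\ge 3$ of $Y$ vanishes in the limit, the first is $0$, the second is $1$, so $Y\Rightarrow \mathcal N(0,1)$.

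For the symmetric and cyclically symmetric cases the argument is identical: the generating functions \eqref{sympp} and \eqref{cspp} are again finite products of factors of the form appearing in Lemma~\ref{sumest} (with $\alpha$ linear in $i,j,k$ and $c\in\{1,2,3\}$ as dictated by the orbit structure), the largest $\alpha$ occurring is again $O(r)$, the number of factors is $O(r^3)$ (resp.\ $O(r^2)$ in some subproducts), and the variances computed in Lemma~\ref{var} grow like $r^4$ (resp.\ $r^3 t$), so the same ratio estimate forces the higher cumulants to $0$ when $r\to\infty$. The main technical point to check carefully — and the one place where a little care is needed — is the justification that the Taylor expansion of $\log P(e^{Ix})$ converges in a fixed neighbourhood of $x=0$ uniform in the box dimensions, so that the cumulant generating function of $Y$ genuinely converges pointwise to $t^2/2$; this follows from the geometric-type bound on $|b_N|$ noted in Lemma~\ref{sumest} (radius of convergence $2\pi$ for $g$), which makes the series for $\log P(e^{Ix})$ absolutely convergent for $|x| < 2\pi/(r+s+t)$ and, after the rescaling $x\mapsto x/\sigma$, for $|x|$ in a region that expands to all of $\mathbb R$. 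I do not expect any serious obstacle beyond bookkeeping with the three families' differing parameters.
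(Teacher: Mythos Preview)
Your proposal is correct and follows essentially the same route as the paper: both expand $\log P(e^{Ix})$ via Lemma~\ref{sumest}, bound the $N$th coefficient by $D\,2^N\,rst\,(r+s+t)^{N-1}$, and combine this with $\sigma^2=\tfrac{1}{12}rst(r+s+t)$ to see that the higher-order contribution is controlled by powers of $\tfrac{1}{rs}+\tfrac{1}{rt}+\tfrac{1}{st}\to 0$. The only difference is in the final packaging: the paper sums the tail $N\ge 4$ geometrically to get pointwise convergence $\log\phi(x)\to -x^2/2$ and then invokes L\'evy's continuity theorem, whereas you fix $N$ and let the box grow, killing each higher cumulant individually and concluding by the method of moments. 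For your version the closing worry about the radius of convergence of the Taylor series is unnecessary---each cumulant is a finite sum, so no analytic convergence issue arises---though that is exactly the point the paper must handle for its L\'evy argument.
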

\begin{proof}For ${\cal PP}(r,s,t).$ Let $r,s\to \infty$ and $t$ vary arbitrarily.  The characteristic function of $Y$ is $\phi(x):=e^{-\mu Ix/\sigma}P\left(e^{Ix/\sigma}\right),$ with $P$ as in eq. \eqref{PGF}. 
We prove that $\log\left(\phi(x)\right)\to -\frac{x^2}{2}$ for $x\in \mathbb{R}$ if $r,s\to \infty.$ Then by Levy's continuity theorem \cite{Fe} the assertion follows.\\
By Lemma \ref{expectation} $Y$ and $-Y$ have the same distribution. So the characteristic function is real valued and the coefficients for odd $N$ vanish. By Lemma \ref{sumest} we have the Taylor expansion
\begin{equation}\label{TayExp}
\phi(x)=-\frac{x^2}{2}+\sum_{N\ge 2}\sum_{i=1}^r\sum_{j=1}^s\sum_{k=1}^t 
H_{2N,1}(i+j+k-1)\left(\frac{xI}{\sigma}\right)^{2N}
\end{equation}
According to the estimate \eqref{TaylorEst} we can bound the modulus of the $2N^{th}$ summand, $N\ge 2$ by
\begin{equation}\label{coeffest1}
D \sum_{i=1}^r\sum_{j=1}^s\sum_{k=1}^t 2^{2N}(i+j+k-1)^{2N-1}\left(\frac{|x|}{\sigma}\right)^{2N}\le 4^ND |x|^{2N}\frac{rst(r+s+t)^{2N-1}}{\sigma^{2N}}.
\end{equation}
Plugging the explicit expression for $\sigma^2$ of Lemma \ref{var} into the rhs of \eqref{coeffest1} we obtain the estimate for the $2N^{th}$ summand, $N\ge 2,$ in the expansion \eqref{TayExp}   
\begin{equation}\label{coeffest2}
\left|\sum_{i=1}^r\sum_{j=1}^s\sum_{k=1}^t 
H_{2N,1}(i+j+k-1)\left(\frac{xI}{\sigma}\right)^{2N}\right|\le D |x|^{2N}\cdot48^{N}\left(\frac{1}{rs}+\frac{1}{rt}+\frac{1}{st}\right)^{N-1}.
\end{equation}
Summing in \eqref{coeffest2} over $N\ge2$ then yields the estimate 
 \begin{equation}
 \left|\log(\phi(x))+x^2/2 \right|\le 48D|x|^2\frac{48|x|^2\left(\frac{1}{rs}+\frac{1}{rt}+\frac{1}{st}\right)}{1-48|x|^2\left(\frac{1}{rs}+\frac{1}{rt}+\frac{1}{st}\right)} .
 \end{equation}
The rhs tends to zero for any fixed real $x$ if $r$ and $s$ tend to infinity. This proves the assertion for the ${\cal PP}(r,s,t)$ case. The other cases are shown with similar estimates.
\end{proof}
Now we consider the non-concentrated case. If $r=s=1$ we have a single column of unit cubes of height $t.$ The volume clearly is uniformly distributed. So, if $rs>1$ the volume random variable can be viewed as sum of dependent random variables with values in $\{0,\ldots,t\}.$ Now the easiest guess is that for large $t$ the dependence vanishes and the volume of a single column is uniformly distributed. This guess is the right one as the following proposition shows.
\begin{prop}\label{nonconcpp}
If $r$ and $s$ are fixed and $t$ tends to infinity, the family 
\[
\left(Z_t:=\frac{X_{rst}}{t}\right)
\]of rescaled random variables converges in distribution to the $(rs)$-fold convolution of the uniform distribution on $[0,1].$  In the ${\cal SPP}(r,t)$ case the so rescaled sequence converges in distribution to the convolution of $r$ factors of the uniform distribution on $[0,1]$ and $r(r-1)/2$ factors of the uniform distribution in $[0,2].$    
\end{prop}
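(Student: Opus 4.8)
The plan is to prove convergence in distribution via pointwise convergence of characteristic functions, using Lévy's continuity theorem exactly as in Proposition \ref{concpp}, but now the limiting characteristic function will not be Gaussian. For the ${\cal PP}(r,s,t)$ case, the characteristic function of $Z_t=X_{rst}/t$ is $\psi_t(x):=P\bigl(e^{Ix/t}\bigr)$ with $P$ as in \eqref{PGF}. First I would take logarithms and use the factorisation over triples $(i,j,k)$: by Lemma \ref{sumest} each factor contributes $\sum_{N\ge 1}H_{N,1}(\alpha_{ijk})\,(Ix/t)^N$ with $\alpha_{ijk}=i+j+k-1$. The point is that as $t\to\infty$ the leading term $H_{1,1}(\alpha_{ijk})\cdot(Ix/t)=\tfrac12\,Ix/t$ is too small to matter individually but there are $rst$ terms; one has to track the $\alpha_{ijk}$-dependence carefully. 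The cleaner route is \emph{not} to use Lemma \ref{sumest} directly but to return to $g(\alpha x/t)-g((\alpha-1)x/t)$ from \eqref{defg} and observe that for fixed $x$ and $t\to\infty$ the argument is $O(1/t)$, so we may write $g(\alpha x/t)-g((\alpha-1)x/t)=\int_{(\alpha-1)x/t}^{\alpha x/t} g'(u)\,du$ and note $g'(0)=\tfrac12$; more precisely, $\log\bigl((\alpha-1)(1-e^{\alpha y})/(\alpha(1-e^{(\alpha-1)y}))\bigr)$ as $y\to 0$ behaves like $\tfrac{y}{2}+O(y^2)$ with the $O(y^2)$ term uniformly controlled by \eqref{TaylorEst}.

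The key computation is then the following. For each fixed pair $(i,j)$ with $1\le i\le r$, $1\le j\le s$, group the $t$ factors indexed by $k=1,\dots,t$. Their product is a telescoping-type product in $q=e^{Ix/t}$:
\begin{equation}
\prod_{k=1}^t\frac{(\alpha_{ijk}-1)(1-q^{\alpha_{ijk}})}{\alpha_{ijk}(1-q^{\alpha_{ijk}-1})}=\frac{(i+j-1)}{(i+j+t-1)}\cdot\frac{1-q^{i+j+t-1}}{1-q^{i+j-1}},
\end{equation}
since the numerators and denominators telescope: $\alpha_{ij,k}-1=\alpha_{ij,k-1}$, so both the rational prefactor and the $(1-q^{\bullet})$ factors cancel consecutively. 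Now substitute $q=e^{Ix/t}$: the prefactor $(i+j-1)/(i+j+t-1)\to 0$ like $1/t$, while $1-q^{i+j+t-1}=1-e^{Ix(i+j+t-1)/t}\to 1-e^{Ix}$ and $1-q^{i+j-1}=1-e^{Ix(i+j-1)/t}\sim -Ix(i+j-1)/t$. Multiplying, the factors of $t$ and of $(i+j-1)$ cancel and we get
\begin{equation}
\prod_{k=1}^t\frac{(\alpha_{ijk}-1)(1-q^{\alpha_{ijk}})}{\alpha_{ijk}(1-q^{\alpha_{ijk}-1})}\;\longrightarrow\;\frac{1-e^{Ix}}{-Ix}=\frac{e^{Ix}-1}{Ix}\qquad(t\to\infty),
\end{equation}
which is precisely the characteristic function of the uniform distribution on $[0,1]$. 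Taking the product over the $rs$ pairs $(i,j)$ shows $\psi_t(x)\to\bigl((e^{Ix}-1)/(Ix)\bigr)^{rs}$, the characteristic function of the $(rs)$-fold convolution of the uniform law on $[0,1]$; Lévy's theorem finishes this case.

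For the ${\cal SPP}(r,t)$ case I would run the same telescoping argument on the generating function \eqref{sympp}. The first product $\prod_{i=1}^r\prod_{k=1}^t(1-q^{2i+k-1})/(1-q^{2i+k-2})$, normalised, telescopes in $k$ for each fixed $i$ and, after setting $q=e^{Ix/t}$, contributes one factor $(e^{Ix}-1)/(Ix)$ per value of $i$ — hence $r$ uniform-$[0,1]$ factors. The second product $\prod_{1\le i<j\le r}\prod_{k=1}^t(1-q^{2+2(i+j+k-2)})/(1-q^{2(i+j+k-2)})$, normalised, telescopes in $k$ as well, but now the exponents run in steps of $2$ and the surviving factor is of the form $(1-q^{2M})/(1-q^{2m})$ with $M\sim t$; under $q=e^{Ix/t}$ this tends to $(e^{2Ix}-1)/(2Ix)$, the characteristic function of the uniform distribution on $[0,2]$, and there are $r(r-1)/2$ such factors. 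Multiplying gives the claimed limit. The main obstacle is purely bookkeeping: one must check the telescoping identities precisely (watching the off-by-one shifts $\alpha_{ij,k}-1=\alpha_{ij,k-1}$ and the analogous shift in the symmetric case) and justify the interchange of the finite product over $(i,j)$ (resp. the $r$ and $r(r-1)/2$ factors) with the $t\to\infty$ limit, which is immediate since each factor converges and there are finitely many of them. No uniform tail estimate is needed here because, unlike in Proposition \ref{concpp}, the number of factors stays bounded.
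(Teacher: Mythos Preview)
Your proposal is correct and follows the paper's approach exactly: telescope the product over $k$ for each fixed pair $(i,j)$, substitute $q=e^{Ix/t}$, take the pointwise limit of each of the finitely many surviving factors to obtain $\bigl((e^{Ix}-1)/(Ix)\bigr)^{rs}$, and conclude by L\'evy's continuity theorem; your ${\cal SPP}(r,t)$ sketch is precisely what the paper means by ``worked out analogously.'' Your telescoped exponents $i+j+t-1$ and $i+j-1$ are in fact the correct ones (the paper's displayed formula carries a harmless off-by-one slip here), and your initial detour through Lemma~\ref{sumest} is, as you yourself note, unnecessary.
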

\begin{proof}
For ${\cal PP}(r,s,t).$ We show that the Fourier transform of $Z_t$ converges pointwise to $\left(\frac{e^{Ix}-1}{Ix}\right)^{rs},$ which is the Fourier transform of the $rs$-fold convolution of the uniform distribution on $[0,1].$ The Fourier transform of $Z_t$ is $P(e^{Ix/t}),$ with $P$ as in \eqref{PGF}. We expand the product running from $1$ to $t$ in $P(e^{Ix/t}).$ All but the last term in the numerator and the first term in the denominator cancel out:
\[
P(e^{Ix/t})=\prod_{i=1}^r\prod_{j=1}^s \frac{(i+j-1)(1-e^{Ix(i+j+t-2)/t})}{(i+j+t-1)(1-e^{Ix(i+j-2)/t})}
\]
The single factors are easily seen to converge to $\frac{e^{Ix}-1}{Ix}.$\\
The ${\cal SPP}(r,t)$ case is worked out analogously.
\end{proof}
\section{Ferrers Diagrams}
A Ferrers diagram is a convex lattice polygon which contains both upper corners and the lower left corner of its smallest bounding rectangle. 
When $t=1$ a boxed plane partition can be viewed as a Ferrers diagram fitting inside an $r\times s$ rectangle. Then formula \eqref{gfpp} reduces to the $q$-binomial coefficient $\left[^{r+s}_{\;\;s} \right]_q.$ The class of Ferrers diagrams with $h$ rows and $w$ columns has the area generating polynomial 
\begin{equation}\label{hwgf}
q^{h+w-1}\left[^{h+w-2}_{\;\;\,h-1} \right]_q
\end{equation}
since at least the $h+w-1$ unit squares which constitute the top row and the leftmost column are contained in such a polygon. So, by Proposition \ref{concpp} a Gaussian area limit law arises in this uniform fixed height and width ensemble, when both height and width tend to infinity.\\
In statistical physics \cite{PrOw} as well as in combinatorics \cite{Bou}, one is also interested in area limit laws in the so-called uniform fixed-perimeter ensembles, where all Ferrers diagrams of a fixed half-perimeter are considered equally likely. For fixed $m\ge 0$ the area generating function of  Ferrers diagrams of half-perimeter $m+2$ is obtained by summing formula \eqref{hwgf} over all pairs $(h,w)\in\mathbb{N}^2$ with $h+w=m+2.$ 
This can be written as
\begin{equation} \label{fpgf}
q^{m+1} \sum_{h=0}^{m}\left[\begin{array}{c}
m\\
h
\end{array}\right]_q.
\end{equation}
Observe that the index of summation in \eqref{fpgf} can be interpreted as the height minus one and that the total number of such Ferrers Diagrams is $2^m.$ So the following probability generating function (PGF)
\begin{equation} \label{fixedpergf}
2^{-m}q^{m+1}u \sum_{h=0}^{m}u^h\left[\begin{array}{c}
m\\
h
\end{array}\right]_q
\end{equation}
describes the ensemble of Ferrers diagrams of half-perimeter $m+2$ counted by height and area. The additional height parameter allows us to use the results for height- and width-ensembles dicussed above. More precisely, we condition the area variable in the perimeter-ensemble on the height variable. In what follows we will prove that the joint distribution of the (properly rescaled) height and area variables in this ensemble converge in distribution to the two-dimensional standard normal distribution. The following proposition from \cite{Fli} (see also \cite{Se}) is taylor made for this situation.
\begin{prop}\label{weakconv}
Let $X_m,Y_m$ be real valued random variables and let $Y_m$ be supported in a lattice $L_m:=\left\{\alpha_m+k\delta_m|k\in\mathbb{Z} \right\},$ where $\delta_m>0$ and $\alpha_m\in \mathbb{R},$ i.e. $\mathbb{P}(Y_m\in L_m)=1.$ Suppose $Y_m$ satisfies a local limit law $\mu$ with a density $g(y)$ w.r.t. the Lebesgue measure on $\mathbb{R}$ (this implies $\delta_m\to 0$), i.e. for all $y\in\mathbb{R}$ and every sequence $(y_m)$ with $y_m\in L_m$ and $y_m\to y$ we have $\mathbb{P}(Y_m=y_m)/\delta_m \to g(y).$  Suppose further that for $\mu$-almost all $y\in\mathbb{R}$ the conditional distributions $\mathbb{P}(X_m\in \cdot|Y_m=y_m)$ converge weakly to a measure $\nu(\cdot,y)$ ($y_m\to y,\,y_m \in L_m$). Then the joint distribution of $(X_m,Y_m)$ converges weakly to the measure $\nu$ defined by 
\[
\nu(A\times B):=\int_{B}\nu(A,y) {\rm d} \mu (y)
\]       
for all Borel sets $A,B\subseteq \mathbb{R}.$
\end{prop}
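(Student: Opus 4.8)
The plan is to establish pointwise convergence of the two-dimensional characteristic functions and then invoke Levy's continuity theorem \cite{Fe}. One first notes that $\nu$ is a probability measure: each $\nu(\cdot,y)$, being a weak limit of probability measures, is itself one, so that $\nu(\mathbb{R}^2)=\int_{\mathbb{R}}\nu(\mathbb{R},y)\,{\rm d}\mu(y)=\mu(\mathbb{R})=1$. (We take for granted the measurability of $y\mapsto\nu(\cdot,y)$ that the displayed formula for $\nu$ presupposes; on the $\mu$-null set where the conditional laws fail to converge we simply fix $\nu(\cdot,y)$ arbitrarily.) Writing $\psi(s,y):=\int_{\mathbb{R}}e^{Isx}\,\nu({\rm d}x,y)$ and using that $\mu$ has Lebesgue density $g$, the characteristic function of $\nu$ is $\widehat\nu(s,t)=\int_{\mathbb{R}}e^{Ity}\psi(s,y)g(y)\,{\rm d}y$, which is continuous; it therefore suffices to prove $\mathbb{E}[e^{I(sX_m+tY_m)}]\to\widehat\nu(s,t)$ for each fixed $(s,t)\in\mathbb{R}^2$.

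To that end I would rewrite the left hand side as an integral against a step density. For $y\in\mathbb{R}$ let $\kappa_m(y)\in L_m$ be the lattice point closest to $y$ (ties broken to the left), set $g_m(y):=\mathbb{P}(Y_m=\kappa_m(y))/\delta_m$, and put $\psi_m(s,y):=\mathbb{E}[e^{IsX_m}\mid Y_m=\kappa_m(y)]$ when $\mathbb{P}(Y_m=\kappa_m(y))>0$ (and $\psi_m(s,y):=0$ otherwise). Since $Y_m$ lives on $L_m$, conditioning on its value, and noting that $g_m$ is a step function with $\int_{\mathbb{R}}g_m(y)\,{\rm d}y=\sum_{y\in L_m}\mathbb{P}(Y_m=y)=1$, one gets $\mathbb{E}[e^{I(sX_m+tY_m)}]=\sum_{y\in L_m}\mathbb{P}(Y_m=y)\,e^{Ity}\,\mathbb{E}[e^{IsX_m}\mid Y_m=y]=\int_{\mathbb{R}}e^{It\kappa_m(y)}\psi_m(s,y)g_m(y)\,{\rm d}y$. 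I claim the integrand converges to $e^{Ity}\psi(s,y)g(y)$ for Lebesgue-almost every $y$. Indeed $\kappa_m(y)\to y$ because $\delta_m\to0$, so $e^{It\kappa_m(y)}\to e^{Ity}$; $g_m(y)\to g(y)$ for \emph{every} $y$ by the local limit hypothesis applied to the sequence $\kappa_m(y)$; and for Lebesgue-almost every $y$ with $g(y)>0$ --- equivalently, for $\mu$-almost every such $y$ --- one has $\mathbb{P}(Y_m=\kappa_m(y))>0$ eventually, so that $\psi_m(s,y)$ is then the characteristic function of $\mathbb{P}(X_m\in\cdot\mid Y_m=\kappa_m(y))$, and the assumed weak convergence of this conditional law to $\nu(\cdot,y)$ forces $\psi_m(s,y)\to\psi(s,y)$. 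On $\{g=0\}$ the bound $|e^{It\kappa_m(y)}\psi_m(s,y)g_m(y)|\le g_m(y)\to g(y)=0$ shows the integrand tends to $0=e^{Ity}\psi(s,y)g(y)$, which proves the claim.

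The remaining --- and only delicate --- step is to pass to the limit under this integral. The integrand is dominated in modulus by $g_m$, and $g_m\to g$ pointwise with $\int_{\mathbb{R}}g_m(y)\,{\rm d}y=1=\int_{\mathbb{R}}g(y)\,{\rm d}y$, so Scheff\'e's lemma gives $g_m\to g$ in $L^1(\mathbb{R})$; the generalised dominated convergence theorem (Pratt's lemma), applied with the $m$-dependent majorants $g_m\to g$, then yields $\mathbb{E}[e^{I(sX_m+tY_m)}]\to\int_{\mathbb{R}}e^{Ity}\psi(s,y)g(y)\,{\rm d}y=\widehat\nu(s,t)$, and since these are characteristic functions converging pointwise to the continuous function $\widehat\nu$, Levy's continuity theorem gives $(X_m,Y_m)\Rightarrow\nu$. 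The point worth stressing is that the ordinary dominated convergence theorem does not apply, because the natural majorants $g_m$ vary with $m$ and are unbounded --- the local limit law is used precisely to upgrade $g_m\to g$ to convergence in $L^1$, which is what makes Pratt's lemma available. (Testing instead against a general $f\in C_b(\mathbb{R}^2)$ would work too, but would additionally require a lemma on integrating a convergent sequence of test functions against a tight convergent sequence of measures, which the characteristic-function route avoids.) Everything else --- the conditioning identity, the elementary limits above, and the bookkeeping on $\{g=0\}$ --- is routine.
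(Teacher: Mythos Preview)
The paper does not actually supply a proof of this proposition: it is quoted as a known result from Fligner \cite{Fli} (with a pointer to Sethuraman \cite{Se}) and then applied. So there is no in-paper argument to compare against.

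That said, your argument is sound and self-contained. Conditioning on the lattice value of $Y_m$, passing to the step density $g_m$, and then using Scheff\'e's lemma to upgrade $g_m\to g$ from pointwise to $L^1$ so that Pratt's generalised dominated convergence theorem applies is exactly the right mechanism here; this is the substantive step, and you have identified it correctly. The remaining items --- the identity $\mathbb{E}[e^{I(sX_m+tY_m)}]=\int e^{It\kappa_m(y)}\psi_m(s,y)g_m(y)\,{\rm d}y$, the a.e.\ pointwise convergence of the integrand (splitting into $\{g>0\}$ and $\{g=0\}$), and the invocation of L\'evy's continuity theorem --- are routine, as you say. Two small remarks: continuity of $\widehat\nu$ needs no separate argument, since characteristic functions of probability measures are automatically uniformly continuous; and your explicit caveat about the measurability of $y\mapsto\nu(\cdot,y)$ is appropriate, since the statement of the proposition implicitly assumes it in defining $\nu$.
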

For $q=1$ formula \eqref{fixedpergf} is simply $2^{-m}u(1+u)^m,$ the PGF of the height random variable $H_m.$ It satisfies the assumptions of \cite[Theorem IX.14]{FlSe} and hence a Gaussian local limit law arises. The mean and standard deviation of the height are easily computed to be asymptotically equal to $m/2$ and $\sqrt{m}/2,$ respectively. So let $Y_m=2(H_m-m/2)/\sqrt{m}.$ The random variable $Y_m$ is supported in $\mathbb{Z}/\sqrt{m}.$ Let $y\in\mathbb{R}$ and $y_m\to y,$ where each $y_m\in\mathbb{Z}/\sqrt{m}.$ The area random variable $A_m,$ conditioned on the event $\left\{Y_m=y_m \right\},$ has PGF
\begin{equation}
q^{m+1}\binom{m}{m/2+y_m\sqrt{m}/2}^{-1}\left[\begin{array}{c}
m\\
m/2+y_m\sqrt{m}/2
\end{array}\right]_q.
\end{equation}
An application of Lemma \ref{expectation} and Lemma \ref{var} with $r=m/2+y_m\sqrt{m}/2, s=m-r$ and $t=1$ shows that the mean and the variance of the conditioned area variables are asymptotically equal to $m^2/8$ and $m^3/48,$ respectively. So according to Proposition \ref{concpp}, the rescaled area variable $X_m:=(A_m-m^2/8)/\sqrt{m^3/48}$ conditioned to $\left\{Y_m=y_m\right\}$ converges weakly to a Gaussian random variable with mean zero and variance one. Now Proposition \ref{weakconv} yields
\begin{prop}
Denote $A_m$ and $H_m$ the random variables of area and height of a Ferrers diagram in the uniform fixed perimeter ensemble described by \eqref{fixedpergf}. Let  
\[
X_m=\frac{A_m-m^2/8}{\sqrt{m^{3}/48}},\quad Y_m=\frac{H_m-m/2}{\sqrt{m}/2}.
\]
Then, as $m\to\infty,$ $\left(X_m,Y_m\right)$ converges weakly to the two-dimensional standard normal-distribution.   
\qed
\end{prop}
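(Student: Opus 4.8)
The strategy is to verify the hypotheses of Proposition \ref{weakconv} with $X_m,Y_m$ as defined in the statement, taking $\mu$ to be the standard normal law on $\mathbb{R}$ and $\nu(\cdot,y)$ the standard normal law for every $y$, so that the product measure $\nu=\mu\otimes(\text{standard normal})$ is exactly the two-dimensional standard normal distribution. Two facts must be assembled: a local limit law for the rescaled height $Y_m$, and a conditional central limit law for the rescaled area $X_m$ given $Y_m=y_m$.

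\textbf{Step 1: the height has a Gaussian local limit law.} Setting $q=1$ in \eqref{fixedpergf} gives the probability generating function $2^{-m}u(1+u)^m$ of $H_m$, so $H_m$ is (a shift of) a $\mathrm{Binomial}(m,1/2)$ variable. Its mean is asymptotically $m/2$ and its standard deviation is asymptotically $\sqrt m/2$, and the quasi-powers framework of \cite[Theorem IX.14]{FlSe} (equivalently, the de Moivre--Laplace theorem) furnishes a local limit law: with $Y_m=2(H_m-m/2)/\sqrt m$ supported on the lattice $L_m=\mathbb{Z}/\sqrt m$ of span $\delta_m=2/\sqrt m\to0$, one has $\mathbb{P}(Y_m=y_m)/\delta_m\to \tfrac{1}{\sqrt{2\pi}}e^{-y^2/2}=:g(y)$ whenever $y_m\in L_m$ and $y_m\to y$. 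This discharges the local-limit hypothesis of Proposition \ref{weakconv}.

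\textbf{Step 2: the conditional area law is Gaussian.} Conditioning \eqref{fixedpergf} on $\{H_m-1=h\}$ with $h=m/2+y_m\sqrt m/2$ and normalising leaves precisely the probability generating function $q^{m+1}\binom{m}{h}^{-1}\left[{m\atop h}\right]_q$, which up to the shift $q^{m+1}$ is the area generating function of Ferrers diagrams inside an $r\times s$ rectangle with $r=h+1$, $s=m-h+1$ (equivalently, boxed plane partitions in a $(r,s,1)$-box after the reduction \eqref{hwgf}). Since $y_m\to y$ forces both $r\to\infty$ and $s\to\infty$, Lemma \ref{expectation} and Lemma \ref{var} give that the conditioned area $A_m\mid\{Y_m=y_m\}$ has mean asymptotic to $m^2/8$ and variance asymptotic to $m^3/48$, and Proposition \ref{concpp} (the two-coordinates-to-infinity Gaussian limit) shows that $X_m=(A_m-m^2/8)/\sqrt{m^3/48}$, conditioned on $\{Y_m=y_m\}$, converges weakly to the standard normal law $\nu(\cdot,y)$, independently of $y$. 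Because $\nu(\cdot,y)$ does not depend on $y$, the weak convergence holds for \emph{all} $y\in\mathbb{R}$, a fortiori for $\mu$-almost all $y$.

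\textbf{Step 3: conclude.} Proposition \ref{weakconv} now applies verbatim and yields that $(X_m,Y_m)$ converges weakly to $\nu(A\times B)=\int_B\nu(A,y)\,{\rm d}\mu(y)=\mu_{\text{std normal}}(A)\cdot\mu_{\text{std normal}}(B)$, i.e.\ to the two-dimensional standard normal distribution. The main obstacle is bookkeeping rather than conceptual: one must confirm that the normalising constants used to define $X_m$ are the \emph{same} sequences that work uniformly as $y_m$ ranges over $L_m$ with $y_m\to y$ — that is, that the $o(\cdot)$ corrections in the mean $m^2/8$ and variance $m^3/48$ coming from Lemma \ref{var} are uniform in the allowed range of $h$ (which they are, since $h=m/2+O(y_m\sqrt m)$ keeps $r$ and $s$ both of order $m/2$), so that Proposition \ref{concpp} can be invoked with the fixed rescaling appearing in the statement rather than a $y$-dependent one.
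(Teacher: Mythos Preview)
Your proof is correct and follows the paper's argument essentially verbatim: establish a Gaussian local limit law for $Y_m$ via the binomial/quasi-powers route, identify the conditioned area PGF as a $q$-binomial to which Lemmas \ref{expectation}--\ref{var} and Proposition \ref{concpp} apply, and then invoke Proposition \ref{weakconv}. Your closing remark on the uniformity of the normalising constants in $y_m$ is a welcome clarification that the paper leaves implicit, and your lattice span $\delta_m=2/\sqrt m$ is in fact the correct one.
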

The last result can also be obtained by analysing a $q$-difference equation satisfied by the generating function of Ferrers diagrams counted by height, area and half-perimeter.

\section*{Acknowledgements}

The author thanks C. Richard for critical and helpful comments on the manuscript. He would also like to acknowledge financial support by the German Research
Council (DFG) within the CRC 701.

\end{document}